\def\be{\begin{equation}}
\def\ee{\end{equation}}
\def\0{^{\phantom 0}}
\def\9{_{\phantom q}}
\DeclareMathOperator\val{val}
\font\koppa=grmn1200
\newtheorem{theorem}{Theorem}[section]
\newtheorem{lemma}[theorem]{Lemma}
\newtheorem{proposition}[theorem]{Proposition}
\theoremstyle{remark}
\newtheorem{remark1}[theorem]{Remark}
\theoremstyle{definition}
\newtheorem{definition1}[theorem]{Definition}
\newtheorem{riemannhypothesis}[theorem]{Riemann Hypothesis}
\newcommand{\Jacobi}[2]{\genfrac{(}{)}{}{1}{#1}{#2}}
\newcommand{\Q}{\mathbf Q}
\newcommand{\Z}{\mathbb Z}
\newcommand{\fb}{\mathfrak b}
\newcommand{\fq}{\mathfrak q}
\newcommand{\OO}{\mathcal O}
\newcommand{\RR}{\mathcal R}
\DeclareMathOperator\SL{SL}
\DeclareMathOperator\SLT{\SL_{2}}
\begin{document}

\title{On the bounded generation of arithmetic \protect{\boldmath{$\SLT$}}}

\subjclass[2010]{Primary 20G30; Secondary 11C20}
\keywords{bounded generation, $\SLT$}

\author{Bruce~W.~Jordan}
\address{Department of Mathematics, Baruch College, The City University
of New York, One Bernard Baruch Way, New York, NY 10010-5526, USA}
\email{bruce.jordan@baruch.cuny.edu}

\author{Yevgeny~Zaytman}
\address{Center for Communications Research, 805 Bunn Drive, Princeton, 
NJ 08540-1966, USA}
\email{ykzaytm@idaccr.org}

\begin{abstract}
Let $K$ be a number field and ${\mathcal O}$ be the ring of
$S$-integers in $K$.  Morgan, Rapinchuck, and Sury have proved that if
the group of units ${\mathcal O}^{\times}$ is infinite, then every
matrix in ${\rm SL}_2({\mathcal O})$ is a product of at most $9$
elementary matrices.  We prove that under the additional hypothesis
that $K$ has at least one real embedding or $S$ contains a finite
place we can get a product of at most $8$ elementary matrices.  If we
assume a suitable Generalized Riemann Hypothesis, then every matrix in
${\rm SL}_2({\mathcal O})$ is the product of at most $5$ elementary
matrices if $K$ has at least one real embedding, the product of at most $6$
elementary matrices if $S$ contains a finite place, and the product of at
most $7$ elementary matrices in general.
\end{abstract}

\maketitle

\section{Introduction}
\label{rabbit}

Let $K$ be a number field and $S$ be a finite set
of primes of $K$ containing the archimedean valuations.
Denote by $\OO=\OO_S$ the ring of $S$-integers in $K$:
\[
\OO=\OO_{S}=\{x\in K^{\times} \mid v(x)\geq 0 \text{ for all $v\notin S$}\}.
\]
For $x\in \OO$ we define the upper triangular matrix $U(x)$
and the lower triangular matrix $L(x)$ by
\begin{equation}
  \label{goose}
  U(x):= \begin{bmatrix} 1 & x\\0& 1\end{bmatrix}\quad 
    \mbox{and}\quad L(x):=\begin{bmatrix} 1 & 0\\
    x & 1\end{bmatrix} .
\end{equation}
The {\em elementary matrices} over $\OO$ are the
matrices $U(x)$, $L(x)$ for $x\in\OO$.

Consider the case where $K$ is the field of rational
numbers $\Q$.  Taking $\OO=\Z$ we have that 
every $A\in\SLT(\Z)$ is a product of elementary
matrices, but the number required is unbounded.
However, if we take $\OO=\Z[1/p]$ for $p$ prime, the
situation is different.  Every matrix $A\in\SLT(\Z[1/p])$
is a product of at at most $5$ elementary matrices as was
proved by  Vsemirnov \cite[Theorem 1.1]{v}.

The key difference between the $\Z$ and $\Z[1/p]$ for this
bounded generation question for $\SLT$ is their units:
$\Z^\times=\langle\pm 1\rangle$ is finite whereas $\Z[1/p]^\times$
is infinite.
Vaser{\v{s}}te{\u\i}n \cite{vas} proved that if $\OO$ has infinitely many
units, then $\SLT(\OO)$ is generated by elementary matrices.
Morgan, Rapinchuk, and Sury \cite[Theorem~1.1]{mrs}
recently proved an explicit general result on
bounded generation:

\begin{theorem}[Morgan, Rapinchuk, and Sury]
  \label{pint}
    Assume that the group of units $\OO^{\times}$ is infinite.
  Then every matrix in $\SLT(\OO)$ can be written as a product of at most $9$
  elementary matrices with the first one lower triangular.
\end{theorem}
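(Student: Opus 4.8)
\emph{Step 1: reduction to unimodular columns.} The plan is to work only with the first column of $A=\begin{pmatrix}a&b\\c&d\end{pmatrix}$. If $E$ is a product of $k$ elementary matrices with $E\begin{pmatrix}1\\0\end{pmatrix}=\begin{pmatrix}a\\c\end{pmatrix}$, then $E^{-1}A$ fixes $\begin{pmatrix}1\\0\end{pmatrix}$, so $E^{-1}A=U(b)$ for some $b\in\OO$ and $A=EU(b)$ is a product of $k+1$ elementary matrices with the same leftmost factor as $E$. Since left multiplication by $L(x)$ or $U(y)$ performs an elementary row operation on a column vector, the theorem reduces to the claim that every unimodular column $\begin{pmatrix}a\\c\end{pmatrix}\in\OO^2$, i.e.\ one with $a\OO+c\OO=\OO$, can be carried to $\begin{pmatrix}1\\0\end{pmatrix}$ by at most $8$ elementary row operations, arranged so that (after a harmless initial $L(0)$-type move if needed) the first one in $A$ is lower triangular. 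I would dispose of the degenerate cases at once: if $c=0$ then $a\in\OO^\times$ and $\mathrm{diag}(a,a^{-1})$ is an explicit short word in $L$'s and $U$'s; if $a=0$ then unimodularity forces $c\in\OO^\times$, which falls into the endgame of Step 3. So assume $a,c\neq 0$.

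\emph{Step 2: forcing a unit into the corner.} The core is the arithmetic statement that there exist $t,s\in\OO$ with $c+s(a+tc)\in\OO^\times$, costing only the two operations $U(t)$ then $L(s)$. To choose $t$, observe that since $\gcd(a,c)=1$ the arithmetic progression $a+c\OO$ is nonempty, so by Dirichlet's theorem on primes in arithmetic progressions over the number field $K$ (Chebotarev for the relevant ray class field) it contains an element $a'=a+tc$ generating a prime ideal $\mathfrak p\notin S$; moreover --- and this is the delicate point --- one wants to impose on $\mathfrak p$, via a Frobenius condition in a Kummer extension of the shape $K(\zeta_n,\sqrt[n]{\OO^{\times}})$, that the reduction map $\OO^\times\to(\OO/\mathfrak p)^\times$ be surjective; here the hypothesis that $\OO^\times$ is infinite is what makes those auxiliary extensions nontrivial. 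Granting this, $(\OO/\mathfrak p)^\times$ is cyclic and generated by units, so $c\equiv\varepsilon\pmod{\mathfrak p}$ for some $\varepsilon\in\OO^\times$, whence $s:=(\varepsilon-c)/a'\in\OO$ gives $c+sa'=\varepsilon\in\OO^\times$.

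\emph{Step 3: endgame and bookkeeping.} With the bottom-left entry now a unit $u$, one upper operation clears the top entry to $0$, and the standard factorization of the antidiagonal involution into three elementary matrices carries $\begin{pmatrix}0\\u\end{pmatrix}$ to $\begin{pmatrix}1\\0\end{pmatrix}$ in three further steps. Tallying the nonzero case --- one harmless lower operation to start, two to reach a unit corner, at most three in the endgame --- and then merging neighbouring operations of the same type, one gets a word of length at most $8$ taking $\begin{pmatrix}a\\c\end{pmatrix}$ to $\begin{pmatrix}1\\0\end{pmatrix}$ with leftmost factor (back in $A$) lower triangular; together with the final $U(b)$ of Step 1 this exhibits $A$ as a product of at most $9$ elementary matrices, the first one lower triangular. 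In the unconditional argument one allows a few extra operations in Step 2 precisely in order to sidestep the strong hypothesis below, which is why the bound is $9$ rather than smaller. The degenerate cases are checked to fit the same pattern.

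\emph{The main obstacle.} The one genuinely hard ingredient is the arithmetic input of Step 2: producing, inside a prescribed arithmetic progression and with control on the residue of the fixed element $c$, a prime whose residue field has its full multiplicative group generated by global units. This is a question of primitive-root type, close to Artin's conjecture, and it is exactly where one must expend real effort to stay unconditional (paying for it with a larger constant); it is also the place where the sharper statements of this paper bring in density theorems of Artin--Hooley type valid under a Generalized Riemann Hypothesis, thereby shrinking the number of elementary factors. A lesser, purely combinatorial, nuisance is to organize the merges and the order of the factors so that the leftmost factor comes out lower triangular uniformly across all cases.
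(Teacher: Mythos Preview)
The paper does not supply its own proof of this theorem; it is quoted from Morgan--Rapinchuk--Sury \cite[Theorem~1.1]{mrs}, with only the remark that the lower-triangular refinement is visible in their Eq.~(21). So there is no in-paper argument to match line by line, but Lemma~\ref{lemma} here (a variant of \cite[Lemma~4.4]{mrs}) shows the shape of the MRS method, and your proposal diverges from it at the crucial point.

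There is a genuine gap in your Step~2. The assertion that one can choose a prime $\mathfrak p$ in the progression $a+c\OO$ for which the reduction $\OO^\times\to(\OO/\mathfrak p)^\times$ is surjective is an Artin primitive-root statement, and it is \emph{not known unconditionally}, even when $\OO^\times$ is infinite. You acknowledge this under ``The main obstacle'' and then say that ``in the unconditional argument one allows a few extra operations in Step~2 precisely in order to sidestep the strong hypothesis,'' but you never say what those operations are or why they work. That workaround \emph{is} the content of the MRS theorem; without it, what you have sketched is essentially the GRH-conditional route (compare Theorem~\ref{pink} via \cite{cw}), not a proof of the unconditional bound~$9$. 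Your bookkeeping in Step~3 also does not add up to~$8$ as claimed, which is a symptom of the same missing piece.

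For contrast, the MRS argument (as reflected in Lemma~\ref{lemma}) never tries to hit a primitive root. It spends elementary moves to push the entries to $\Q$-split primes and then uses Hilbert symbols and reciprocity, together with a Dirichlet-type theorem for $\Q$-split primes \cite[Theorem~3.3]{mrs}, to arrange that the relevant residue is a \emph{square}; it is then matched by an even power $\text{{\koppa\char19}}^{2m}$ of a degree-one prime. This replaces the Artin-type input by purely quadratic information, at the cost of extra steps---which is exactly why the unconditional constant is~$9$ rather than the $5$--$7$ obtained under Hypothesis~\ref{GRH}.
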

\noindent The lower triangular assertion follows from their
proof: see \cite[Eq. (21) and following]{mrs}.

Here we prove two theorems on a
matrix $A\in\SLT(\OO)$:

\begin{theorem}
  \label{pinky}
  Suppose that $S$ contains a finite place or suppose that the group of units
  $\OO^{\times}$ is infinite and $K$ has at least one real embedding. 
  Then 
  $A\in\SLT(\OO)$ can be written as the product of at most $8$ 
elementary matrices with the first one
lower triangular.
  \end{theorem}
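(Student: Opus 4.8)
The plan is to enter the proof of Theorem~\ref{pint} and sharpen it by one step. First note that $\OO^{\times}$ is infinite in both cases of the hypothesis: if $S$ contains a finite place then $|S|\geq 2$, so Dirichlet's $S$-unit theorem gives $\operatorname{rank}\OO^{\times}=|S|-1\geq 1$; in the remaining case it is assumed. Hence Theorem~\ref{pint} and the methods of \cite{mrs} are available. The endgame is explicit and purely algebraic. Writing $A=\begin{bmatrix} a & b\\ c & d\end{bmatrix}$: if $b\in\OO^{\times}$ then $A=L((d-1)b^{-1})\,U(b)\,L((a-1)b^{-1})$, a product of three elementary matrices with the first lower triangular; and if $a\in\OO^{\times}$ then, using $\operatorname{diag}(a,a^{-1})=L(a^{-1}-1)\,U(1)\,L(a-1)\,U(-a^{-1})$ and one more $L(\cdot)$ to clear the $(2,1)$ entry, $A=L(ca^{-1}+a^{-1}-1)\,U(1)\,L(a-1)\,U(a^{-1}(b-1))$, a product of four elementary matrices with the first lower triangular. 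It therefore suffices to write $A=B\cdot E$ with $E$ a product of at most four elementary matrices (column operations) and $B$ having a unit in its $(1,1)$ entry, or $E$ having at most five factors and $B$ having a unit in its $(1,2)$ entry: the resulting word then has at most eight factors, the first lower triangular.

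Producing such a factorization is the substance of the argument, and here one follows and improves \cite{mrs}. Right multiplication by $L(s)$ changes the first column ($a\mapsto a+bs$) and by $U(t)$ the second ($b\mapsto at+b$); since $(a,b)=\OO$, making the top-right entry a unit amounts to hitting the class of $b$ modulo $a$ by $\OO^{\times}$, which in general fails, so one first adjusts $a$. Using a generalization to $K$ of Dirichlet's theorem on primes in arithmetic progressions, choose $s$ so that $a'=a+bs$ generates a prime ideal $\mathfrak p\notin S$; then $(\OO/\mathfrak p)^{\times}$ is cyclic, and one wants a fixed non-torsion unit to generate it, or at least a subgroup large enough that the remaining reduction modulo $\mathfrak p$ finishes within the available factors. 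In \cite{mrs} the possible failure of $\OO^{\times}\to(\OO/\mathfrak p)^{\times}$ being onto is handled at the price of one further elementary matrix, which is the source of the bound $9$. Our hypothesis supplies exactly that missing degree of freedom at a place $v\in S$: when $v$ is a finite place one exploits the existence of a unit with nonzero $v$-valuation, so the congruence can be adjusted by multiplying through by a power of that unit; when $v$ is a real place one exploits signs and the room available at the archimedean absolute value. Either way the extra move is absorbed for free and the ninth elementary matrix disappears, giving~$8$.

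The main obstacle is precisely this unit-congruence step. After creating the prime $\mathfrak p=a'\OO$ one must, inside a bounded budget of elementary factors, promote a prescribed matrix entry to an honest unit, which is an Artin-primitive-root-type demand on $\mathfrak p$; unconditionally one cannot require $\OO^{\times}$ to surject onto $(\OO/\mathfrak p)^{\times}$, which is exactly why $8$—and not less—is the unconditional bound here and why \cite{mrs} is stuck at $9$ without the extra place. Granting a suitable Generalized Riemann Hypothesis, effective Chebotarev together with primitive-root estimates let one pick $\mathfrak p$ in a small ray class having a prescribed non-torsion unit as a primitive root, and feeding this into the same scheme removes further factors, yielding the conditional bounds $5$, $6$, $7$ according to whether $K$ has a real embedding, $S$ has a finite place, or neither is assumed. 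The delicate part of the write-up is the bookkeeping: tracking which congruence and archimedean conditions are imposed at each of the at most eight slots and verifying that the single extra degree of freedom—the real place, or the unit with nonzero $v$-valuation—makes them simultaneously satisfiable while keeping the leading factor lower triangular.
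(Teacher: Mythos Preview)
Your framework is right—enter the Morgan--Rapinchuk--Sury argument and save one elementary factor using the extra place—but the mechanism you describe does not match what actually works, and as written it is not a proof. You locate the bottleneck at the surjectivity of $\OO^{\times}\to(\OO/\mathfrak p)^{\times}$ and propose to fix it by ``adjusting the congruence by a power of a unit with nonzero $v$-valuation'' (finite case) or by ``signs and archimedean room'' (real case). Neither of these is made precise, and neither is how the step is saved: multiplying $b$ by a unit does not change its residue class modulo $a'$ in any useful way unless that unit already generates a large subgroup of $(\OO/a'\OO)^{\times}$, which is exactly the Artin-type obstruction you yourself flag as unavailable unconditionally.

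The paper's argument is quite different and does not go through units at all. One uses the extra place $v$ to choose, by Dirichlet, a prime $b'\equiv b\bmod a$ whose $v$-valuation is odd (interpreting ``odd'' as ``negative'' at a real place). Because $b'$ has odd $v$-valuation, the local Hilbert symbol $(\,\cdot\,,b')_{v}$ is nontrivial, and this freedom lets one choose a prime $a'\equiv a\bmod b'$ with $(a',b')_{w}=1$ at every place $w$ above $2$, $\infty$, and $S$ except $v$, and with $(a',b')_{v}=(a',b')_{b'\OO}$. Hilbert reciprocity then forces $(a',b')_{a'\OO}=1$, i.e.\ $b'$ is a \emph{square} modulo $a'$. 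Hence one can replace $b'$ in a single $u$-move by $\text{\koppa\char19}^{2m}$ for a $\mathbb Q$-split prime $\text{\koppa\char19}$ in the right residue class. The net effect is $(a,b)\overset{3,u}{\Longrightarrow}(a',\text{\koppa\char19}^{2m})$, which replaces the four-move combination of \cite[Lemmas~4.3 and~4.4]{mrs} and yields $(a,b)\overset{7}{\Longrightarrow}(1,0)$ instead of $\overset{8}{\Longrightarrow}$. The saving comes from quadratic reciprocity, not from unit congruences; your sketch is missing this idea entirely. (Your final paragraph also mixes in the GRH argument for Theorem~\ref{pink}, which is a separate division-chain argument and not part of the proof of Theorem~\ref{pinky}.)
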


\begin{theorem}
  \label{pink}
  Assume that the group of units $\OO^{\times}$ is infinite and assume
  the Generalized Riemann Hypothesis~{\rm \ref{GRH}}.  Then
  $A\in\SLT(\OO)$ can be written as the product of at most $5$
  elementary matrices if $K$ has at least one real embedding, the
  product of at most $6$ elementary matrices if $S$ contains a finite
  place, and the product of at most $7$ elementary matrices in
  general with the first one lower triangular in each case.
  \end{theorem}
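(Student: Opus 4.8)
The plan is to translate everything into explicit identities among elementary matrices and reduce it to the existence of a prime value of a linear form satisfying an Artin-type primitive-root condition, which is where the Generalized Riemann Hypothesis will enter. Write $A=\begin{bmatrix}a&b\\c&d\end{bmatrix}$. Multiplying out elementary matrices, one checks the dictionary: $A$ is a product of at most four elementary matrices with the first one lower triangular if and only if $a-1$ factors in $\OO$ as $a-1=ef$ with $e\equiv c$ and $f\equiv b\pmod{a\OO}$ --- the relation $ad-bc=1$ being exactly what keeps those two congruences compatible. Right multiplication of $A$ by $L(-x)$ is a column operation replacing the first column $(a,c)$ of $A$ by $(a-bx,c-dx)$ and costs one more lower-triangular factor; left multiplication by an $L$ is a row operation. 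Interposing a bounded number of such operations in front of the criterion above, and again using $\det=1$ to force the companion congruences, reduces ``$A$ is a product of at most $n$ elementary matrices, first lower triangular'' (for $n=5,6,7$) to the assertion: \emph{there is $x\in\OO$, and perhaps auxiliary parameters, so that $a-bx$ generates a prime ideal $\mathfrak p\notin S$ lying in a prescribed class of the ray class group of a modulus supported on $b\OO$ and the real places of $K$, and so that $\OO^{\times}$ surjects onto $(\OO/\mathfrak p)^{\times}$.} The cases $b=0$, and more generally those in which some entry of $A$ is already a unit, are dispatched directly and cheaply --- for instance, $A$ is a product of at most three elementary matrices, first lower, once its top-right entry is a unit.

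The arithmetic core is to produce such an $x$. Because $\gcd(a,b)=1$, the prime $\mathfrak p$ is asked to lie in a fixed ray class with a generator in the class $a+b\OO$, and here there is no obstruction: the Chebotarev density theorem unconditionally furnishes infinitely many such $\mathfrak p$ of arbitrarily large norm. This alone suffices for the unconditional statements, but only after weakening ``$a-bx$ generates a prime'' to ``$a-bx$ is a unit times a prime times a cofactor supported on a fixed finite set'' or ``$a-bx$ has boundedly many prime factors'', each weakening costing one more elementary matrix --- which is why the unconditional bounds (Theorems~\ref{pint} and~\ref{pinky}) are larger. What the Generalized Riemann Hypothesis~\ref{GRH} buys is the surjectivity of $\OO^{\times}$ onto $(\OO/\mathfrak p)^{\times}$ for a positive density of admissible $\mathfrak p$: this is precisely Artin's primitive-root problem for the finitely generated group $\OO^{\times}$ along an arithmetic progression, proved by the Hooley--Lenstra method --- an inclusion--exclusion over the Kummer extensions $K(\zeta_{\ell},(\OO^{\times})^{1/\ell})$ governed by the effective Chebotarev density theorem (Lagarias--Odlyzko, with explicit constants as refined by Bach--Sorenson and others) applied under GRH for the Dedekind zeta functions of those fields. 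Running this sieve over a general number field $K$, with the extra constraint that $\mathfrak p$ sit in the prescribed ray class and with the discriminants and regulators of the Kummer towers controlled, and then fitting the output cleanly to the $\SLT$ combinatorics, is the step I expect to be the main obstacle.

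Finally, the three numerical bounds reflect how much auxiliary room is available for streamlining the reduction and for clearing residual obstructions in the sieve. In general, $\OO^{\times}$ infinite already lets the argument run and gives the bound $7$. If $S$ contains a finite place $\mathfrak q$, then $\OO^{\times}$ contains units of prescribed nonzero valuation at $\mathfrak q$; such a unit absorbs the $\mathfrak q$-part of the modulus, removing one obstruction and one elementary matrix, for a bound of $6$. If $K$ has a real embedding, a fundamental unit of infinite order may be taken real, so its powers realize both signs and a range of archimedean sizes; this lets one use the wide ray class group in place of the narrow one, supplies the sign freedom needed to meet the primitive-root and ray-class conditions simultaneously, and frees one further parameter, saving one more elementary matrix and giving $5$ --- the bound Vsemirnov \cite{v} obtained for $\Z[1/p]$. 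In each case the first factor is lower triangular because, just as in \cite{mrs}, the reduction is arranged to begin by modifying the second row of $A$, i.e.\ by left multiplication by a lower-triangular elementary matrix.
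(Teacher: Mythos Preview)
Your strategy is correct in outline and is, in substance, exactly the mathematics the paper relies on: under Hypothesis~\ref{GRH} for the fields $K(\zeta_n,\sqrt[n]{\OO^\times})$, the Hooley--Lenstra sieve produces primes $\mathfrak p$ in a prescribed arithmetic progression for which $\OO^\times$ surjects onto $(\OO/\mathfrak p)^\times$, and this is precisely what yields short terminating division chains for $(a,b)$. The paper, however, does not redo this; its proof is completely modular. It invokes \cite[Theorems~2.2, 2.9, 2.14]{cw} as black boxes to obtain terminating division chains of length $5$, $4$, and $3$ in the general, finite-place, and real-embedding cases respectively, and then applies the elementary Theorem~\ref{chain} of the present paper, which converts a terminating division chain of length $k$ into a factorization of $A$ as a product of $k+2$ elementary matrices with the first one lower triangular. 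Your opening ``dictionary'' ($A$ a product of four, first lower, iff $a-1=ef$ with $e\equiv c$, $f\equiv b\bmod a\OO$) is essentially the base case $k=2$ of that theorem, namely Proposition~\ref{granted} and Theorem~\ref{ghost}.

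What you have written is a plan, not a proof. You explicitly flag the sieve over the Kummer extensions --- with its discriminant and regulator control, its compatibility with the ray-class constraint, and its translation back to the $\SLT$ combinatorics --- as ``the step I expect to be the main obstacle'', and you do not carry it out. That step is the entire content of \cite[\S2]{cw}, and completing your approach would amount to reproving those theorems. Your heuristics for why the bound drops to $6$ and then $5$ (absorbing a $\mathfrak q$-part of the modulus; passing from narrow to wide ray class via sign freedom at a real place) point in the right direction but are not the actual arguments of \cite{cw}, and in any case are not proofs. The paper's contribution here is not to the GRH-dependent arithmetic but to the conversion step: Theorem~\ref{chain} gives $k+2$ rather than the $k+4$ produced by the construction in \cite{cw}, which is why each of the three bounds is two smaller than what Cooke--Weinberger themselves obtained.
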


We give diophantine applications of Theorems \ref{pinky} and
\ref{pink} in \cite{jz}.  These applications require us to know that
the first matrix in our factorization into elementary matrices can be
taken to be lower triangular.  Hence we keep track of this here,
whereas it is not a concern in \cite{mrs}.

\section{Theorem \texorpdfstring{\ref{pinky}}{1.2}}
\label{punted}
\subsection{Reducing the first row of a matrix 
\texorpdfstring{\protect{\boldmath{$A\in\SLT(\OO)$}}}{A in SL_2(O)}}
\label{bronze}

Following \cite[Section 4]{mrs}, let
\begin{equation}
\label{shrimp}
\RR(\OO)=\{(a,b)\in \OO^2\mid a\OO+b\OO=\OO\}.
\end{equation}
The $(a,b)\in \RR(\OO)$ are precisely the first rows of matrices in
$\SLT(\OO)$. The effect on the first row of a matrix 
$A=\left[\begin{smallmatrix}a & b\\ * & *
\end{smallmatrix}\right]\in\SLT(\OO)$ from right multiplying by an
elementary matrix as in \eqref{goose} is
\begin{eqnarray}
\label{duck}
AL(x)& =&  \begin{bmatrix} a & b\\ * & * \end{bmatrix}\begin{bmatrix} 
1 & 0\\x& 1\end{bmatrix}=\begin{bmatrix} a+bx & b\\ * & *\end{bmatrix},\\
\nonumber AU(x)& =&  \begin{bmatrix} a & b\\ * & * \end{bmatrix}\begin{bmatrix} 
1 & x\\ 0 & 1\end{bmatrix}=\begin{bmatrix} a & ax+b \\ * & *\end{bmatrix}
\end{eqnarray}
for $x\in\OO$.

The following succinct notation using only the first rows of matrices 
is convenient:   
\begin{definition1}
\label{dime}
{\rm
For $x\in\OO$ and $(a,b)\in\RR(\OO)$, set $(a,b)\ell(x)=
(a+bx,b)$ and $(a,b)u(x)=(a, ax+b)$.

If there exist $x_1, \ldots , x_k\in\OO$ with
\begin{equation}
(c,d)=\begin{cases} (a,b)\ell(x_1)u(x_2)\cdots \ell(x_k)\quad\text{$k$ odd}\\
(a,b)\ell(x_1)u(x_2)\cdots u(x_k)\quad\text{$k$ even}
\end{cases}
\end{equation}
for $(a,b), (c,d)\in\RR(\OO)$, write $(a,b)\overset{k, \ell}{\Longrightarrow}
(c,d)$.
Similarly, if 
there exist $x_1, \ldots , x_k\in\OO$ with
\begin{equation}
(c,d)=\begin{cases} (a,b)u(x_1)\ell(x_2)\cdots u(x_k)\quad\text{$k$ odd}\\
(a,b)u(x_1)\ell(x_2)\cdots \ell(x_k)\quad\text{$k$ even}
\end{cases}
\end{equation}
for $(a,b), (c,d)\in\RR(\OO)$, write $(a,b)\overset{k, u}{\Longrightarrow}
(c,d)$.
As in \cite[Section 4]{mrs}, write $(a,b)\overset{k}{\Longrightarrow}
(c,d)$ if $(a,b)\overset{k,\ell}{\Longrightarrow} (c,d)$ or
$(a,b)\overset{k,u}{\Longrightarrow} (c,d)$.
}
\end{definition1}

\subsection{The Proof of Theorem \texorpdfstring{\ref{pinky}}{1.2}}
  \label{iron}

First we need the following Lemma \ref{lemma}, which requires
a definition.
\begin{definition1}
  \label{opiate}
  {\rm \cite[Section~3.1]{mrs}.} A prime $\fq$ of the number field
  $K$ lying above the rational prime $q$ is \emph{$\mathbb{Q}$-split}
  if $q>2$ and $K_\fq\cong \mathbb{Q}_q$.
  \end{definition1}

If $K$ is a number field, $\mu=\mu(K)$ is the number of roots of unity
in $K$.
\begin{lemma}
\label{lemma}
{\rm (cf. \cite[Lemma 4.4]{mrs}.)} Suppose $K$ has at least one real embedding or $S$ contains a finite place
and $(a,b)\in\RR(\OO)$.  Let $\mu=\mu(K)$. Then there exists 
$a'\in\OO$ and infinitely many $\mathbb{Q}$-split prime principal ideals $\fq$
of $\OO$ with a generator {\koppa\char19}
such that for any $m\equiv 1
\bmod{\phi(a'\OO)}$ we have
$(a,b)\overset{3,u}{\Longrightarrow}(a',\mbox{{\koppa\char19}}^{\mu m})$.
\end{lemma}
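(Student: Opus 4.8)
The plan is to work out the single constraint imposed by the three operations and then arrange it. Since $u$ and $\ell$ preserve $\RR(\OO)$, every intermediate pair lies in $\RR(\OO)$ automatically, so no coprimality has to be checked along the way. One computes
\[
(a,b)\,u(x_1)\,\ell(x_2)\,u(x_3)=(a',\,a'x_3+\beta),\qquad \beta:=b+ax_1,\ \ a':=a+\beta x_2,
\]
so that $\beta\equiv b\bmod a\OO$, $a'\equiv a\bmod\beta\OO$, and $\gcd(a,b)=1$ forces $\gcd(a,\beta)=\gcd(a',\beta)=1$. To land on $(a',\varpi^{2m})$ (writing $\varpi$ for a generator of $\fq$) we need $a'x_3=\varpi^{2m}-\beta$ solvable in $\OO$, i.e.\ $\varpi^{2m}\equiv\beta\bmod a'\OO$; provided $\gcd(\varpi,a')=1$, the assumption $m\equiv1\bmod\phi(a'\OO)$ gives $\varpi^{2m}\equiv\varpi^{2}\bmod a'\OO$ (since $\phi(a'\OO)$ is the order of $(\OO/a'\OO)^{\times}$). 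Thus the whole content reduces to arranging a single congruence $\varpi^{2}\equiv\beta\bmod a'\OO$ with $\gcd(\varpi,a')=1$, for infinitely many $\mathbb Q$-split principal primes $\fq=(\varpi)$ and a fixed $a'$.

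I would then choose the data in three Dirichlet steps. First, by Dirichlet's theorem for $\OO$ (applicable since $\gcd(a,b)=1$), pick $x_1$ so that $\beta=b+ax_1$ generates a prime ideal outside $S$; here I simultaneously prescribe the sign of $\beta$ at a chosen real place of $K$, or, if instead $S$ contains a finite place $v_0$, the $v_0$-adic class of $\beta$ — a free parameter precisely because that place may be omitted from the modulus. Second, pick $x_2$ so that $a'=a+\beta x_2$ generates a prime ideal outside $S$, distinct from $(\beta)$, with $\beta$ a square modulo $a'\OO$: whether a prime $a'$ in the fixed progression $a+\beta\OO$ can also split in $K(\sqrt\beta)$ is governed by how $K(\sqrt\beta)$ sits relative to the ray class field of modulus dividing $4\beta\cdot\infty$, and the parameter fixed in step one (the sign of $\beta$ at a real place, or its class at $v_0$) is exactly what makes the two conditions compatible. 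Write $\beta\equiv\gamma^{2}\bmod a'\OO$ with $\gcd(\gamma,a')=1$. Third, by Dirichlet again (Chebotarev over the ray class field of modulus $a'\OO$), take infinitely many prime ideals $\fq$ outside $S$, different from $(\beta)$ and $(a')$, lying in the ray class of the principal ideal $\gamma\OO$; each such $\fq$ is then principal with a generator $\varpi\equiv\gamma\bmod a'\OO$, and since all but a set of primes of $K$ of density zero have residue degree $1$ over $\Q$ we may further require $N\fq>2$ unramified, so that $\fq$ is $\mathbb Q$-split. Now $\gcd(\varpi,a')=1$, $\varpi^{2}\equiv\gamma^{2}\equiv\beta\bmod a'\OO$, and $x_3:=(\varpi^{2m}-\beta)/a'\in\OO$ gives $(a,b)\overset{3,u}{\Longrightarrow}(a',\varpi^{2m})$ for every $m\equiv1\bmod\phi(a'\OO)$, as required.

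The crux is step two: forcing $\beta$ to be a square modulo $a'$ when $a'$ is already pinned down modulo $\beta$. By quadratic reciprocity this collapses to a single local symbol, and when $K$ is totally imaginary and $S$ is purely archimedean there is no free place at which to adjust it — this is exactly the point at which the general Morgan--Rapinchuk--Sury argument must spend an extra elementary matrix, and it explains the shape of the hypothesis of Theorem \ref{pinky}. A secondary check is that steps one through three fit together: the quadratic field $K(\sqrt\beta)$ entering step two must be carried along with the ray class field used in step three, but since the target class for $\fq$ is selected only after $\beta$ and $a'$ are fixed, no linear-disjointness conflict arises there. Finally one should dispose separately of the degenerate cases in which $a$ or $b$ is zero or a unit, where the required congruence can be solved by inspection.
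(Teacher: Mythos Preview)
Your three-step outline is essentially the paper's proof: Dirichlet to pick a prime $\beta\equiv b\bmod a$ with a prescribed local condition at the distinguished place $v$, then Dirichlet plus reciprocity to pick a prime $a'\equiv a\bmod\beta$ with $\beta$ a square modulo $a'$, then the $\mathbb{Q}$-split Dirichlet theorem (cited from \cite{mrs}) to pick $\varpi\equiv\gamma\bmod a'$. The paper carries out your step two explicitly via Hilbert symbols rather than ray class fields: it requires $\beta$ to have \emph{odd} valuation at $v$ (this is the precise form of your ``free parameter'', and is what makes $(a',\beta)_v$ freely adjustable), then chooses $a'$ with $(a',\beta)_{v_i}=1$ at all places in $S$ and above $2$ and $\infty$ except $v$, and with $(a',\beta)_v=(a',\beta)_{\beta\OO}$; the product formula then forces $(a',\beta)_{a'\OO}=1$, which is exactly the computation your phrase ``collapses to a single local symbol'' is pointing at.
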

\begin{proof}
Let $v$ be either a real place of $K$ or a finite place in $S$.  To
simplify subsequent notation we use the convention that the valuation
of an element $\alpha\in K$ with respect to a real place $v$ is given
by $\val_v(\alpha)=1$ if $\alpha$ is negative with respect to $v$ and
$0$ otherwise.
  
Let $b'\in\OO$ be a prime relatively prime to $\mu$, congruent to
$b\bmod{a}$, and such that $\val_v(b')=1$.  Such a $b'$ exists by
Dirichlet's theorem.  This is clear in the archimedean case. In the
nonarchimedean case, this can be done by finding an ideal
$\fb\subset\OO_K$ in the same ideal class as $I_{v,\OO_K}^{-1}$ with $\fb
I_{v,\OO_K}$ having a generator $b'\equiv b\bmod{(a\OO)}$.  Note that
$(a,b)\overset{1,u}{\Longrightarrow}(a,b')$.

For a prime $w$ of $K$, denote by $\Jacobi{\ast,\ast}w_\mu$ the power
residue symbol of degree $\mu$ at $w$ (cf. \cite[p. 85]{bms}).  Find a
prime $a'$ of $\OO$ congruent to $a\bmod{b'}$ such that
\begin{enumerate}[\upshape (a)]
\item
\label{rest}
$\Jacobi{a', b'}{v_i}_\mu=1$  for all places $v_i$ in
$S$ or dividing $\mu$ except $v$ and 
\item
\label{well}
$\Jacobi{a',
  b'}{v}_\mu=\Jacobi{a', b'}{b'\OO}_\mu^{-1}$.
\end{enumerate}
Note that
\begin{enumerate}
\item
\label{rated}
In the nonarchimedean case, \eqref{rest} 
are all congruence conditions modulo
sufficiently high powers of $I_{v_i,\OO_K}$.
\item
\label{poorly}
\eqref{well} is a condition modulo a power of the product of the
ideals above $v$ and $b'$.  That it is nonempty follows from the fact
that the map given by $\Jacobi{\ast, b'}{v}_\mu$ is surjective if
$\val_v(b')\equiv1\pmod{\mu}$.
\end{enumerate}
We can see that such an $a'$ exists
from Dirichlet's theorem.  Note that $a'$ and $b'$ are relatively
prime and $(a,b')\overset{1,\ell}{\Longrightarrow}(a',b')$.

Observe that by the reciprocity law $\Jacobi{a', b'}{a'\OO}_\mu=1$.
This implies that  $b'\equiv x^\mu\bmod(a'\OO)$ for some residue $x$ using
\cite[(A.16)]{bms}; cf. \cite[p.~18]{mrs}.  By the
generalization of Dirichlet's theorem to $\mathbb{Q}$-split primes,
see \cite[Theorem 3.3]{mrs} there are infinitely many odd,
degree-$1$ principal prime ideals $\fq$ with a generator
\mbox{$\text{\koppa\char19}\equiv x\bmod{(a'\OO})$}.  Then for all
these {\koppa\char19} and for all $m\equiv1 \bmod{\phi(a'\OO)}$ we
have $(a',b')\overset{1,u}{\Longrightarrow}(a',
\mbox{{\koppa\char19}}^{\mu m})$.  Hence we are done.
\end{proof}

\begin{proof}[Proof of Theorem~{\rm \ref{pinky}}]
Suppose $S$ contains a finite place or $\#\OO^{\times}=\infty$
and $K$ has at least one real embedding.  Let 
$A=\left[\begin{smallmatrix} a & b\\c &  d\end{smallmatrix}\right]\in
\SLT(\OO)$.
Proceed as in the proof of \cite[Section 4]{mrs} only use Lemma
\ref{lemma} instead of \cite[Lemma~4.4]{mrs}.  Thus we don't need to
use \cite[Lemma~4.3]{mrs} and we end up 
showing $(a,b)\stackrel{7}{\Longrightarrow}(1,0)$ instead of 
$(a,b)\stackrel{8}{\Longrightarrow}(1,0)$ as in \cite[Eq. (21)]{mrs}.
Hence $A$ is the product of $8$ elementary matrices beginning with 
a lower triangular matrix.
\end{proof}

\section{Theorem \texorpdfstring{\ref{pink}}{1.3}}
\label{founding}

\subsection{Division Chains}
\label{ready}

\begin{definition1}{\rm (cf.~\protect{\cite[Section 2]{cw}}.)}
\label{squid}
{\rm
Let $(a,b)\in\RR(\OO)$ as in \eqref{shrimp}.  A
\emph{division chain}  of length $k$ starting with $(a,b)$ is a sequence of
equations 
\begin{eqnarray}
\label{rose}  a &=& q_1b + r_1\\
 \nonumber b &=& q_2r_1 + r_2\\
\nonumber  & \mathrel{\makebox[\widthof{=}]{\vdots}}& \\
 \nonumber r_{k-3} &=& q_{k-1}r_{k-2}+r_{k-1}\\
\nonumber  r_{k-2} &=& q_kr_{k-1}+r_k
\end{eqnarray}
\noindent with $q_i\in\OO$, $1\leq i\leq k$.
The division chain is \emph{terminating} if $r_k=0$.
Notice that since $a$ and $b$ are relatively prime, in the
terminating case $r_{k-1}$ must be a unit.
}
\end{definition1}
\begin{remark1}
\label{tuna}
{\rm
The division chains of 
Definition \ref{squid} are closely 
related to the row reductions of Definition
\ref{dime}.  The division chain in \eqref{rose} of length $k$
starting with $(a,b)\in \RR(\OO)$ is equivalent to
\[
(a,b)\overset{k,\ell}{\Longrightarrow}\begin{cases}
(r_{k-1},r_{k})\text{ if $k$ is even}\\
(r_{k}, r_{k-1})\text{ if $k$ is odd}.
\end{cases}
\]
}
\end{remark1}

 The following lemma is elementary:

\begin{lemma}
  \label{extra}
 We have
 $b\equiv v\bmod a$  for  $v\in\OO^{\times}$
if and only if there
 exists a terminating division chain of length $2$ starting with $(b,a)$.
\end{lemma}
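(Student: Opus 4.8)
The plan is to verify both directions directly from the definition of a division chain of length $2$. First I would spell out what such a chain starting with $(b,a)$ says: there exist $q_1,q_2\in\OO$ and $r_1,r_2\in\OO$ with
\[
b=q_1a+r_1,\qquad a=q_2r_1+r_2,
\]
and termination means $r_2=0$, so $a=q_2r_1$. As noted in Definition \ref{squid}, since $b$ and $a$ are coprime, in the terminating case $r_1$ must be a unit; conversely if $r_1\in\OO^\times$ then $a=q_2r_1$ can always be solved with $q_2=ar_1^{-1}\in\OO$ and $r_2=0$. Hence the existence of a terminating division chain of length $2$ starting with $(b,a)$ is equivalent to the existence of $q_1\in\OO$ and $v\in\OO^\times$ with $b=q_1a+v$, i.e. to $b\equiv v\bmod a$ for some unit $v$.

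For the forward direction, suppose $b\equiv v\bmod a$ with $v\in\OO^\times$. Then $r_1:=b-q_1a=v$ for the appropriate $q_1\in\OO$, and I set $q_2:=av^{-1}\in\OO$, $r_2:=a-q_2r_1=a-v v^{-1}a=0$; this is a terminating division chain of length $2$ starting with $(b,a)$. For the converse, given a terminating division chain $b=q_1a+r_1$, $a=q_2r_1$ (with $r_2=0$), the second equation shows $r_1\mid a$; combined with $r_1=b-q_1a$ it gives $r_1\mid b$, so $r_1$ divides $\gcd(a,b)$, which is a unit since $(b,a)\in\RR(\OO)$. Thus $r_1\in\OO^\times$, and the first equation reads $b\equiv r_1\bmod a$ with $r_1$ a unit, so we may take $v=r_1$.

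There is essentially no obstacle here — the statement is flagged as elementary and the only mildly substantive point is the observation that coprimality of $a$ and $b$ forces the last nonzero remainder to be a unit, which is already recorded in Definition \ref{squid}. I would therefore keep the proof to a few lines, making sure to phrase it so that the two implications are cleanly separated and the role of the hypothesis $(b,a)\in\RR(\OO)$ is visible in the converse direction.
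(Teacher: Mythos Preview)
Your proof is correct and is exactly the straightforward verification the paper has in mind; indeed, the paper calls the lemma elementary and omits the proof entirely, so there is nothing to compare beyond noting that your write-up supplies the details (including the use of $(b,a)\in\RR(\OO)$ to force $r_1\in\OO^\times$) in the expected way.
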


\subsection{Terminating division chains of length \texorpdfstring{\protect{\boldmath{$2$}}}{2}}
\label{copper}

Consider the matrix
\begin{equation}
  \label{weather}
A=   \begin{bmatrix} a & b \\ c& d\end{bmatrix}\in\SLT(\OO).
\end{equation}

Assume in this subsection that there is a terminating division chain of
length $2$ starting with $(b,a)$.  Therefore, by Lemma \ref{extra}, 
we have $b\equiv v \bmod a$, or $b-v=ax$ for $x\in\OO$,
with a unit $v\in\OO^{\times}$.

\begin{proposition}
  \label{granted}
  \begin{equation*}
    AU(-x)L(v^{-1}(1-a))U(-v)=L(w)
  \end{equation*}
  for some $w\in\OO$.
\end{proposition}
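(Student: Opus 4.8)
The plan is to verify the identity by a direct matrix computation, performing the three right-multiplications one at a time and simplifying at each stage with the one relation available there: first $b-ax=v$, then $a+(1-a)=1$, and finally the determinant normalization.

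First I would compute $AU(-x)$. By \eqref{duck}, right-multiplication by $U(-x)$ sends the first row $(a,b)$ to $(a,b-ax)=(a,v)$, so
\[
AU(-x)=\begin{bmatrix} a & v \\ c & d-cx \end{bmatrix}.
\]
Next, right-multiplying by $L(v^{-1}(1-a))$ adds $v^{-1}(1-a)$ times the second column to the first; note $v^{-1}(1-a)\in\OO$ because $v\in\OO^{\times}$. The new $(1,1)$-entry is $a+v\cdot v^{-1}(1-a)=1$, so
\[
AU(-x)L(v^{-1}(1-a))=\begin{bmatrix} 1 & v \\ c' & d-cx \end{bmatrix}
\]
for some $c'\in\OO$. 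Finally, right-multiplying by $U(-v)$ sends the first row $(1,v)$ to $(1,v-v)=(1,0)$, giving a matrix $\begin{bmatrix} 1 & 0 \\ w & e \end{bmatrix}$ with $w,e\in\OO$.

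To finish, rather than computing the bottom row explicitly I would just track determinants: each of $U(-x)$, $L(v^{-1}(1-a))$, $U(-v)$ has determinant $1$, so the full product again lies in $\SLT(\OO)$; together with its first row being $(1,0)$ this forces $e=1$, i.e.\ the product equals $L(w)$. There is no real obstacle here — the statement is an explicit short computation — and the only points to watch are that $v^{-1}(1-a)\in\OO$ and that one avoids needless bookkeeping by invoking the determinant in the last step; if wanted, the explicit value is $w=c+(d-cx)v^{-1}(1-a)$.
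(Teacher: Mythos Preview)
Your proof is correct and follows essentially the same approach as the paper: compute the first row of the product directly and then use the determinant to pin down the $(2,2)$-entry. You simply spell out the three multiplications explicitly (and note $v^{-1}(1-a)\in\OO$), whereas the paper summarizes them in one line.
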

\begin{proof}
  Multiplying matrices verifies that
  \begin{equation*}
    AU(-x)L(v^{-1}(1-a))U(-v)=:B =\begin{bmatrix}
    1 & 0\\\ast & \ast\end{bmatrix}.
  \end{equation*}
  But the entry $B_{22}$ must be $1$ since $B\in\SLT(\OO)$.
  Hence $B=L(w)$ for some $w\in\OO$.
\end{proof}
\begin{theorem}
  \label{ghost}
  Let $A$ be as in \eqref{weather} assume there is a terminating
division chain of length $2$ starting with $(b,a)$.  
Then $A$ can be written as product of at most $4$
  elementary matrices with the first one lower triangular.
\end{theorem}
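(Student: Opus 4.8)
The plan is to leverage Proposition~\ref{granted} directly. By hypothesis there is a terminating division chain of length~$2$ starting with $(b,a)$, so Lemma~\ref{extra} gives a unit $v\in\OO^{\times}$ and an element $x\in\OO$ with $b-v=ax$, i.e.\ $b\equiv v\bmod a$. Proposition~\ref{granted} then asserts that
\[
AU(-x)L\bigl(v^{-1}(1-a)\bigr)U(-v)=L(w)
\]
for some $w\in\OO$. The right-hand side $L(w)$ is a single lower triangular elementary matrix, so rearranging we get
\[
A=L(w)\,U(v)\,L\bigl(v^{-1}(a-1)\bigr)\,U(x),
\]
using that $U(-v)^{-1}=U(v)$, $L\bigl(v^{-1}(1-a)\bigr)^{-1}=L\bigl(v^{-1}(a-1)\bigr)$, and $U(-x)^{-1}=U(x)$. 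This exhibits $A$ as a product of exactly $4$ elementary matrices, namely $L(w),U(v),L(v^{-1}(a-1)),U(x)$, and the first factor $L(w)$ is lower triangular, as required. If one of the intermediate exponents happens to vanish the product is even shorter, but $4$ is the bound claimed.

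The only subtlety worth spelling out is that all four exponents $w,v,v^{-1}(a-1),x$ genuinely lie in $\OO$: this holds because $x\in\OO$ by construction, $v\in\OO^{\times}\subseteq\OO$, $v^{-1}\in\OO$ since $v$ is a unit, hence $v^{-1}(a-1)\in\OO$, and $w\in\OO$ by the conclusion of Proposition~\ref{granted}. I expect no real obstacle here; the content was already packaged into Proposition~\ref{granted}, and the theorem is just the observation that inverting three explicit elementary matrices and moving them to the other side turns that identity into a length-$4$ factorization of $A$ with the stated triangularity of the leading factor.
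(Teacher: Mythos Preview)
Your proof is correct and follows essentially the same approach as the paper: invoke Proposition~\ref{granted}, invert the three elementary factors using $U(s)^{-1}=U(-s)$ and $L(s)^{-1}=L(-s)$, and read off $A=L(w)U(v)L(v^{-1}(a-1))U(x)$. The additional verification that all four exponents lie in~$\OO$ is a harmless elaboration.
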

\begin{proof}
  From Proposition \ref{granted} we have
  \begin{equation}
    \label{tile}
    A=L(w)U(-v)^{-1}L(v^{-1}(1-a))^{-1}U(-x)^{-1} .
  \end{equation}
  But for any $s\in \OO$ we have $U(s)^{-1}=U(-s)$ and
  $L(s)^{-1}=L(-s)$.  Hence \eqref{tile}
  becomes
  \begin{equation*}
    A=L(w)U(v)L(v^{-1}(a-1))U(x) .
    \end{equation*}
      \end{proof}

\subsection{General Matrices in \texorpdfstring{\protect{\boldmath{$\SLT(\OO)$}}}{SL2(O)}}

\begin{theorem}
  \label{chain}
Let $A$ be as in \eqref{weather}.  If there exists a terminating
division chain of length $k>1$ starting at
\[
\begin{cases}
  (a,b)\quad\mbox{if}\quad\mbox{$k$ is odd}\\
  (b,a)\quad\mbox{if}\quad\mbox{$k$ is even ,}
\end{cases}
\]
then $A$ can be written as the product of at most $k+2$ elementary matrices
with the first one lower triangular.
\end{theorem}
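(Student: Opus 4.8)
The plan is to reduce Theorem~\ref{chain} to the already-established Theorem~\ref{ghost} by peeling off division steps one at a time, each costing a single elementary matrix, until we are left with a terminating division chain of length $2$. Concretely, suppose we have a terminating division chain of length $k$ starting with the appropriate pair (call it $(\alpha,\beta)$, with $(\alpha,\beta)=(a,b)$ for $k$ odd and $(\alpha,\beta)=(b,a)$ for $k$ even). The first equation $\alpha = q_1\beta + r_1$ together with Remark~\ref{tuna} (or a direct computation using \eqref{duck}) says that right-multiplying $A$ by one elementary matrix $L(-q_1)$ (resp.\ $U(-q_1)$, according to parity) produces a matrix $A'$ whose relevant first-row pair is $(\beta,r_1)$ up to order, and the chain $\beta = q_2 r_1 + r_2,\ \dots$ is a terminating division chain of length $k-1$ starting at $A'$ in the sense required. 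Here the parity bookkeeping has to be done carefully so that the ``starting at $(a,b)$ vs.\ $(b,a)$'' alternation in the statement matches the alternation between $\ell$ and $u$ moves.

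The cleanest way to organize this is induction on $k$, with base case $k=2$ handled by Theorem~\ref{ghost}. For the inductive step, given a terminating division chain of length $k\geq 3$ starting at $A$ (with the parity convention), apply \eqref{duck} to see that $A E_1$ has a terminating division chain of length $k-1$ starting at the matrix $AE_1$ with the \emph{opposite} parity convention, where $E_1\in\{L(-q_1),U(-q_1)\}$ is a single elementary matrix chosen according to the parity of $k$. By induction $AE_1$ is a product of at most $(k-1)+2 = k+1$ elementary matrices with the first one lower triangular, hence $A = (AE_1)E_1^{-1}$ is a product of at most $k+2$ elementary matrices, and the first one is unchanged, so it is still lower triangular. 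The only subtlety is that $E_1^{-1}$ is again elementary (since $L(s)^{-1}=L(-s)$, $U(s)^{-1}=U(-s)$), and that appending it on the right does not disturb the leftmost factor.

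I expect the main obstacle to be purely organizational rather than mathematical: getting the parity conventions consistent. One must check that when $k$ is odd the first reduction step is a $u$-move (multiplication by $U(-q_1)$) acting on the first \emph{column}/\emph{second entry}, matching the ``start at $(a,b)$'' convention, whereas when $k$ is even the first step is an $\ell$-move; and that after one step the residual chain of length $k-1$ has exactly the parity that the inductive hypothesis demands (i.e.\ that length $k-1$ with the flipped convention is the correct hypothesis). It is worth double-checking the small cases $k=2$ and $k=3$ explicitly against Theorem~\ref{ghost} and Lemma~\ref{extra} to make sure the indices line up; once the $k=3$ case is verified by hand, the general induction is routine. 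A secondary point to be careful about: Theorem~\ref{ghost} delivers exactly $4 = 2+2$ matrices starting lower triangular, which is precisely the $k=2$ instance of the claimed bound $k+2$, so the base case is genuinely the content and the induction merely propagates it.
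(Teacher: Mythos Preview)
Your proposal is correct and follows essentially the same inductive scheme as the paper: base case $k=2$ from Theorem~\ref{ghost}, and for $k\geq 3$ strip off one elementary factor on the right to reduce to a chain of length $k-1$ on $AE_1$, then invoke the inductive hypothesis. One small slip: in your final paragraph the parity assignment is reversed. When $k$ is odd the chain begins $(a,b)$ with $a=q_1b+r_1$, so the first reduction is the $\ell$-move $AL(-q_1)$ (changing the first entry), not a $u$-move; likewise $k$ even uses $U(-q_1)$. Since you flagged this as the point to double-check, fixing it is routine and the argument goes through unchanged.
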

\begin{proof}
We proceed by induction on $k$.  The $k=2$ case is Theorem
\ref{ghost}.

Suppose $k$ is odd.  Then by the definition of a terminating division chain
there exists $y\in\OO$ such that
\[
a-r=by
\]
and $(b,r)$ has a terminating division chain of length
$k-1$.  Then
\[
AL(-y)=\begin{bmatrix}
r&b\\
*&*
\end{bmatrix}
\]
is the product of $k+1$ elementary matrices with the first one
lower triangular by the induction
hypothesis.

The $k$ even case is handled similarly only switch the roles of $a$
and $b$ as well as multiply by $U(-y)$ instead of $L(-y)$.
\end{proof}

Note that this construction is similar to that used in
\cite[Corollary 2.3]{cw} except ours is more efficient, so we end up
with $k+2$ rather than the $k+4$ elementary matrices
produced by the construction of \cite[p.~496--498]{cw}. This accounts for
why our numbers are two smaller than theirs.

\subsection{The Generalized Riemann Hypothesis and
 the Proof of Theorem \texorpdfstring{\ref{pink}}{1.3}}
\label{yellow}

 The relevant Riemann hypothesis is most clearly stated in \cite[Theorem
  3.1]{len}.

\begin{riemannhypothesis}
\label{GRH}
The $\zeta$-function of $K(\zeta_n,\sqrt[n]{\OO^\times})$ satisfies
the Riemann hypothesis for all integers $n>0$.
\end{riemannhypothesis}

\begin{proof}[Proof of Theorem~{\rm \ref{pink}}]
Let $\OO$ be the $S$-integers in $K$ and $(a,b)\in\RR(\OO)$
as in \eqref{shrimp}.
Assume Hypothesis \ref{GRH}.
Then by \cite[Theorem 2.2]{cw}  there is a terminating
division chain of length $5$ starting with $(a,b)$.
If $S$ contains at least one finite prime, then there
there is a terminating division chain of length $4$ starting with
$(a,b)$ by  \cite[Theorem 2.9]{cw}, attributed to
Lenstra.
If $K$ has a real place,
then \cite[Theorem 2.14]{cw} shows that there is a terminating division
chain of length $3$ starting with $(a,b)$.
Now apply Theorem \ref{chain}.
\end{proof}

Morgan, Rapinchuk, and Sury~\cite[Proposition~5.1]{mrs} show that if
$p>7$ is a prime, then not every matrix in $\SL_2(\mathbb{Z}[1/p])$ is a
product of $4$ elementary matrices.  Hence the bound of $5$ elementary
matrices if $K$ has a real embedding in Theorem~\ref{pink} assuming
Hypothesis~\ref{GRH} would be strict.

\section*{Acknowledgments}
We sincerely thank the referee for corrections and improvements
to the paper.

\bibliographystyle{plain}
\bibliography{jordan-zaytman2}

\end{document}